\theoremstyle{plain}
\newtheorem{defi}{Definition}[section]
\newtheorem{theo}[defi]{Theorem}
\newtheorem{con}[defi]{Conjecture}
\newtheorem{obs}[defi]{Observation}
\newtheorem*{theo*}{Theorem}
\newcounter{claimcount}
\newenvironment{claim}{\refstepcounter{claimcount}\textbf{Claim \arabic{claimcount}.}}{}
\theoremstyle{remark}
\newcommand{\ENDproof}{\hfill $\blacksquare$\medskip\par}
\title{A counterexample to the $S_{10}$- and the $S_{12}$-Conjecture}
\author{Isaak H.~Wolf}
\date{}
\begin{document}

\maketitle

\begin{abstract}
For two graphs $G$ and $H$, a mapping $f\colon E(G) \to E(H)$ is an $H$-coloring of $G$, if it is a proper edge-coloring and for every $v \in V(G)$ there exists a vertex $u \in V(H)$ with $f(\partial_G(v))=\partial_H(u)$. Motivated by the Petersen Coloring Conjecture, Mkrtchyan [A remark on the Petersen coloring conjecture of Jaeger, \emph{Australas. J. Combin.}, 56 (2013), 145-151] and Mkrtchyan together with Hakobyan [$S_{12}$ and $P_{12}$-colorings of cubic graphs, \emph{Ars Math. Contemp.}, 17 (2019), 431-445] made the following two conjectures. (I) Every cubic graph has an $S_{10}$-coloring, where $S_{10}$ is a graph on 10 vertices sometimes also referred to as the Sylvester graph. (II) Every cubic graph with a perfect matching has an $S_{12}$-coloring, where $S_{12}$ is the graph obtained from $S_{10}$ by replacing the central vertex with a triangle. In this note we present a (rather small) counterexample to both conjectures.
\end{abstract}

{\bf Keywords:} cubic graphs, Petersen Coloring Conjecture, $S_{10}$-Conjecture, $S_{12}$-Conjecture.

{\bf Math. Subj. Class.:} 05C15, 05C70.

\section{Introduction}
In this note we consider finite graphs that may have parallel edges but no loops. For two graphs $G$ and $H$, an \emph{$H$-coloring} of $G$ is a mapping $f\colon E(G) \to E(H)$ such that
\begin{itemize}
	\item if $e_1,e_2 \in E(G)$ are adjacent, then $f(e_1) \neq f(e_2)$,
	\item for every $v \in V(G)$ there exists a vertex $u \in V(H)$ with $f(\partial_G(v))=\partial_H(u)$.
\end{itemize}

If such a mapping exists, then we write $H \prec G$ and say $H$ \emph{colors} $G$. In 1988 Jaeger made the following seminal conjecture, where $P$ denotes the Petersen graph (see Figure~\ref{fig:PS_10S_12}).

\begin{con}[Petersen Coloring Conjecture, Jaeger \cite{jaeger1988nowhere}, 1980]
	\label{con:P_con}
If $G$ is a bridgeless cubic graph, then $P \prec G$.	
\end{con}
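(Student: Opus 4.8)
The plan is to separate the trivial case from the genuine difficulty. If $G$ is a $3$-edge-colorable bridgeless cubic graph, fix a proper $3$-edge-coloring $c\colon E(G)\to\{1,2,3\}$ and any vertex $u_0$ of $P$, say with $\partial_P(u_0)=\{a_1,a_2,a_3\}$ (three distinct edges, since $P$ is simple). Then $e\mapsto a_{c(e)}$ is a $P$-coloring of $G$: adjacent edges receive distinct colors, hence distinct images, and the three edges at any vertex of $G$ are mapped bijectively onto $\partial_P(u_0)$. (The same argument shows that every cubic graph $H$ colors every $3$-edge-colorable cubic graph.) Hence it suffices to prove $P\prec G$ when $G$ has chromatic index $4$; and since a small edge-cut of $G$ can be handled by splitting $G$ at the cut into smaller instances and gluing the $P$-colorings --- here one uses that $P$ is arc-transitive, so that the boundary patterns realized by $P$ at a cut of size at most $3$ can be matched up freely --- one may restrict to cyclically $4$-edge-connected cubic graphs of girth at least $5$, i.e.\ to snarks.

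For such $G$ the natural attack is induction along a structural decomposition, in the spirit of the Four Color Theorem and of Berge--Fulkerson-type arguments: locate a small reducible configuration $R$ in $G$ (a short cycle, a small edge-cut, or a prescribed subgraph), replace it by a smaller gadget to obtain a bridgeless cubic graph $G'$ with $|E(G')|<|E(G)|$, apply the induction hypothesis to obtain a $P$-coloring of $G'$, and show that every such coloring extends across $R$. The extension step needs two ingredients: a catalogue of the color patterns that $P$ attains on the link of $R$, and enough symmetry of $P$ to rotate an arbitrary admissible pattern into the one prescribed by the induction; for $R$ an edge-cut of size $4$ or $5$ one would in addition want a splitting lemma reducing the problem to cyclically $6$-edge-connected snarks.

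The main obstacle is precisely the crux of the problem, which is why Conjecture~\ref{con:P_con} has remained open since 1980: no unavoidable set of configurations is known to be reducible with respect to $P$-colorings, and the free-rotation phenomenon that makes cuts of size at most $3$ harmless already breaks down at size $4$, where the family of boundary patterns attained by $P$ is not rich or symmetric enough to force an extension. Moreover Conjecture~\ref{con:P_con} is known to imply the Berge--Fulkerson conjecture, the Fan--Raspaud conjecture and the $(5,2)$-cycle-cover conjecture, all themselves open, so a successful attack must subsume all of them simultaneously. I would therefore expect the inductive route to stall on the cyclically highly connected snarks, and my honest assessment is that the statement will not yield to a refinement of the reductions above but requires a genuinely new idea.
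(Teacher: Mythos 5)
The statement you were asked to prove is labelled a \emph{conjecture} in the paper, and the paper offers no proof of it: Jaeger's Petersen Coloring Conjecture has been open since 1980, and the paper only records that it is trivially true for cubic graphs of chromatic index $3$ and has been verified by computer for bridgeless cubic graphs of order at most $36$. Your proposal is accurate as far as it goes --- the reduction of the $3$-edge-colorable case is correct (a proper $3$-edge-coloring composed with a bijection onto $\partial_P(u_0)$ is indeed a $P$-coloring, and the same works for any target cubic graph), and the standard splitting arguments for edge-cuts of size at most $3$ do reduce the problem to cyclically $4$-edge-connected graphs of girth at least $5$. But everything after that is a description of a strategy, not a proof: no reducible configuration is exhibited, no extension lemma is proved, and you say yourself that the inductive route stalls on highly connected snarks. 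So there is a genuine gap, and it is the entire substance of the conjecture --- the case $\chi'(G)=4$.

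To be clear about why no proof could have been matched against the paper: this paper's contribution is in the opposite direction. It constructs a counterexample to the related $S_{10}$- and $S_{12}$-Conjectures (Conjectures~\ref{con:S_10_con} and~\ref{con:S_12_con}); Conjecture~\ref{con:P_con} itself is stated only as motivation and remains open. Your honest assessment of the obstacles (the failure of free rotation at cuts of size $4$, the fact that a proof would subsume Berge--Fulkerson and the $5$-cycle double cover conjecture) is consistent with the literature, but a correct submission here would have been to state that the claim is a conjecture and cannot currently be proved, rather than to present a partial reduction as a ``proof proposal.''
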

 
If this conjecture is correct, then some other long-standing conjectures such as the Berge-Fulkerson Conjecture \cite{fulkerson1971blocking} and the 5-Cycle Double Cover Conjecture (see \cite{C.-Q._Zhang_book}) are also true. Duo to its far reaching consequences not only for cubic graphs, the Petersen Coloring Conjecture can be considered as one of the most important conjectures in graph theory.
Conjecture~\ref{con:P_con} is trivially true for cubic graphs with chromatic index 3 and it is verified for all bridgeless cubic graphs of order at most 36 with the help of a a computer \cite{brinkmann2013generation}. Nevertheless, a general answer seems to be far away. The Petersen Coloring Conjecture motivated research in several directions. One line of research is to use other graphs for coloring and study whether for different graph classes there exists a graph (or a set of graphs) that colors all graphs from this class. For instance, in \cite{MTZ_r_graphs} and \cite{ma2025sets}  this question is studied for the class of $r$-regular graphs and of $r$-graphs, respectively, for all $r>3$. For cubic graphs there are the following three conjectures, where the latter two are for cubic graphs with bridges.

\begin{con}[$S_{4}$-Conjecture, Mazzuoccolo \cite{MAZZUOCCOLO2013235} (see also \cite{MZ_S4equi}), 2013]
	\label{con:S_4_con}
	If $G$ is a bridgeless cubic graph, then $S_4 \prec G$.
\end{con}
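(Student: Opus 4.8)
Conjecture~\ref{con:S_4_con} is, as stated, still open, so what follows is a plan of attack together with an account of the main obstruction, rather than a complete proof. The structural fact I would start from is that $H$-colorings compose: if $f\colon E(G)\to E(H)$ is an $H$-coloring of $G$ and $g\colon E(H)\to E(K)$ is a $K$-coloring of $H$, then $g\circ f$ is a $K$-coloring of $G$, because the vertex condition passes to the composition at once, and properness is preserved since the vertex condition forces $f$ to send $\partial_G(v)$, for every $v\in V(G)$, to a set of pairwise adjacent edges of $H$. As $S_4\prec P$, Conjecture~\ref{con:S_4_con} is therefore implied by the Petersen Coloring Conjecture (Conjecture~\ref{con:P_con}); in particular it already holds for every cubic graph of chromatic index $3$. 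So a genuine resolution must either prove Jaeger's conjecture --- not the aim here --- or give a direct argument producing $S_4$-colorings that does not route through $P$ and that exploits the fact that $S_4$, being far smaller, imposes much weaker local constraints.

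For such a direct attack I would proceed in three steps. First, reformulate: an $S_4$-coloring of $G$ is a proper edge-coloring in which the colour pattern at every vertex is one of the finitely many patterns occurring at a vertex of $S_4$, and unwinding this yields a purely combinatorial condition on $G$ that (cf.~\cite{MZ_S4equi}) can be recast in terms of a small prescribed family of perfect matchings of $G$ --- the form best suited to induction. Second, reduce the connectivity: a bridgeless cubic graph with a $2$-edge-cut or a $3$-edge-cut decomposes, in the usual way, into smaller bridgeless cubic pieces whose colorings can be glued together, so it suffices to treat cyclically $4$-edge-connected cubic graphs. Third, build the colouring: start from a $2$-factor of $G$ together with a proper $3$- or $4$-edge-colouring, locate the vertices whose pattern is not admissible for $S_4$, and try to eliminate them one by one by local switches along alternating paths and cycles between colour classes; alongside this I would verify the conjecture by computer for all cyclically $4$-edge-connected cubic graphs up to a moderate order, mirroring the verification of the Petersen Coloring Conjecture for bridgeless cubic graphs of order at most $36$ in~\cite{brinkmann2013generation}, both as a consistency check and to isolate the extremal configurations that a reducibility argument would then have to handle.

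The hard part is the third step, and the obstruction is exactly the one that keeps the Petersen Coloring Conjecture itself out of reach: the vertex condition is a genuinely global constraint on how the colour classes (equivalently, the perfect matchings) of $G$ fit together, so a local switch that repairs one bad vertex typically creates new bad vertices elsewhere, and there is no obvious monovariant measuring progress. Although $S_4$ is small, its set of admissible vertex patterns is rich enough that ``$S_4\prec G$ for every bridgeless cubic $G$'' has resisted every partial technique developed for Jaeger's conjecture; I would expect any successful proof to rest on a new structural invariant of bridgeless cubic graphs that controls this matching-compatibility directly, rather than on a clever finite gadget.
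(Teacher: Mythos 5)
You have not given a proof: your text is explicitly a research programme, and none of its three steps is carried out, so there is no argument to verify. More importantly, your premise is wrong relative to this paper. The statement appears here only as a quoted conjecture --- the paper itself offers no proof of it --- but the surrounding text records that the $S_4$-Conjecture is ``now a theorem,'' verified by Kardo{\v{s}}, M{\'a}{\v{c}}ajov{\'a} and Zerafa \cite{KARDOS20231}. So the correct response to this statement is not a plan of attack for an open problem but an appeal to (or reproduction of) that proof, which goes through the perfect-matching reformulation of $S_4$-colorings from \cite{MZ_S4equi} that you gesture at in your first step, not through the local-switching heuristic of your third step.

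The one complete and correct piece of your submission is the composition property of $H$-colorings and the resulting implication that $S_4 \prec P$ together with $P \prec G$ gives $S_4 \prec G$; this matches the paper's own remark that $\prec$ is transitive, and it is the standard reason the Petersen Coloring Conjecture implies Conjecture~\ref{con:S_4_con}. But that only reduces the statement to a strictly harder conjecture. Your second step (splitting along $2$- and $3$-edge-cuts to reduce to cyclically $4$-edge-connected graphs) is plausible but unexecuted, and your third step, as you yourself concede, names the obstruction without overcoming it. In short: the gap is total --- no proof is attempted --- and the claim that the problem is open is contradicted by the paper; you should engage with \cite{KARDOS20231} rather than restate the difficulty.
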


\begin{con}[$S_{10}$-Conjecture, Mkrtchyan \cite{Mkrtchyan_Pet_col}, 2012]
	\label{con:S_10_con}
If $G$ is a cubic graph, then $S_{10} \prec G$.	
\end{con}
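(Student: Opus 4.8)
The plan is to \emph{refute} this conjecture --- and, with it, the $S_{12}$-Conjecture --- by exhibiting a small cubic graph that admits no $S_{10}$-colouring. I would first narrow the search drastically. If $G$ is $3$-edge-colourable then $S_{10}\prec G$ is immediate: compose a proper $3$-edge-colouring of $G$ with any bijection from the colour set to $\partial_{S_{10}}(u)$, where $u$ is a degree-$3$ vertex of $S_{10}$; the resulting $f$ is proper and $f(\partial_G(v))=\partial_{S_{10}}(u)$ for every $v$. So a counterexample has chromatic index $4$. Since the $S_{10}$-Conjecture, unlike the Petersen Coloring Conjecture, explicitly concerns cubic graphs \emph{with} bridges, and since a small bridgeless cubic graph of chromatic index $4$ is a small snark --- precisely the non-trivial instances of the Petersen Coloring Conjecture, which has been verified up to order $36$ --- the natural place to look is among small cubic graphs with a bridge. (Any such graph automatically has chromatic index $4$: removing a bridge $e$ leaves two components, each of odd order, so $e$ lies in every perfect matching, whence three disjoint perfect matchings cannot cover $E(G)$.) To contradict the $S_{12}$-Conjecture as well --- whose hypothesis demands a perfect matching --- I would additionally require $G$ to have one. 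A natural candidate is the simple cubic graph on $10$ vertices obtained from two copies of the (non-$3$-edge-colourable) graph ``$K_4$ with one edge subdivided'', which has a single degree-$2$ vertex, by joining the two subdivision vertices with a bridge; this graph has a bridge and a perfect matching.

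The analytic core is a local analysis of colourings across the bridge. Any $S_{10}$-colouring $f$ of a cubic graph induces a map $\varphi\colon V(G)\to V(S_{10})$ with $f(\partial_G(v))=\partial_{S_{10}}(\varphi(v))$, so every $\varphi(v)$ has degree $3$; along an edge $uv$ both $\varphi(u)$ and $\varphi(v)$ are endpoints of $f(uv)$, so $f(uv)$ joins them unless $\varphi(u)=\varphi(v)$. If $e=xy$ is a bridge with sides $G_x,G_y$, then $f$ restricted to $E(G_x)$ is a proper $E(S_{10})$-colouring of $G_x$ in which the two edges of $G_x$ at $x$, together with the single ``colour'' $f(e)$, make up $\partial_{S_{10}}(\varphi(x))$, and symmetrically for $G_y$. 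Writing $\mathrm{Feas}(G_x)\subseteq E(S_{10})$ for the set of edges realisable as $f(e)$ in such a colouring of the $x$-side, one checks that $G$ admits an $S_{10}$-colouring if and only if $\mathrm{Feas}(G_x)\cap\mathrm{Feas}(G_y)\neq\emptyset$. For the candidate above the two sides are isomorphic, so it would suffice to show that a single side has \emph{empty} feasible set; should that be too strong, one takes two non-isomorphic sides of this kind and shows that their feasible sets are disjoint.

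Computing $\mathrm{Feas}$ of a side is a finite task: place $\varphi$ on the degree-$3$ vertices of $S_{10}$ up to the action of $\mathrm{Aut}(S_{10})$, propagate the forced edge-colours, and record which values of $f(e)$ survive; the scarcity of colourings of (subdivided) $K_4$ should make this case tree collapse after very few branchings. The $S_{12}$-Conjecture then follows with little extra work: an $S_{12}$-colouring of $G$ induces a map into $S_{12}$, and if no vertex of $G$ lands in the central triangle then $f$ uses no edge of that triangle, so contracting the triangle back to the central vertex converts $f$ into an $S_{10}$-colouring of $G$ --- which has just been excluded; the colourings that do use a triangle vertex are ruled out by the same bridge-and-propagation argument, now run with $S_{12}$ in place of $S_{10}$. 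The main obstacle throughout is exactly this finite verification that the relevant feasible set is empty (or that two are disjoint): it is sensitive to the precise shapes of $S_{10}$ and $S_{12}$, and the genuinely creative part is to locate a $G$ small and symmetric enough that the verification fits on a page by hand rather than requiring a computer.
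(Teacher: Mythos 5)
Your overall strategy --- refute the conjecture with a small bridged cubic graph of chromatic index $4$, using the fact that a bridge of $G$ must be coloured by a bridge of $S_{10}$ and then propagating constraints across it --- is the right genre of argument, and your reduction to class-2 graphs and the remark about deducing the $S_{12}$ case are fine. But your concrete candidate fails: the graph made of two copies of $K_4$-with-one-subdivided-edge joined by a bridge \emph{does} admit an $S_{10}$-colouring, so its two feasible sets are neither empty nor disjoint. Write $S_{10}$ as a centre $w$ joined to $z_1,z_2,z_3$, with $z_i$ joined to $x_i,y_i$ and two parallel edges $e_i,e_i'$ between $x_i$ and $y_i$. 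On one side, with $K_4$-vertices $1,2,3,4$, subdivided edge $34$ and subdivision vertex $c$, set $f(cc')=wz_1$, $f(3c)=z_1x_1$, $f(4c)=z_1y_1$, $f(13)=f(24)=e_1$, $f(14)=f(23)=e_1'$, $f(12)=z_1x_1$, and colour the other side identically. This is proper, and every vertex sees exactly $\partial_{S_{10}}(z_1)$, $\partial_{S_{10}}(x_1)$ or $\partial_{S_{10}}(y_1)$. The deeper issue is that ``one bridge plus chromatic index $4$'' is no obstruction at all: $S_{10}$ is built precisely so that its three arms $z_ix_iy_i$, each ending in a doubled edge, absorb odd balloons hanging off bridges, which is why small graphs of your shape keep turning out colourable.

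What your plan is missing is the actual source of the contradiction. The paper's counterexample $G^*$ is built from the Petersen graph: subdivide the three edges at one vertex, expand one of the new vertices into a triangle, and cap the three resulting degree-$2$ vertices with $S_4$-gadgets, so that $G^*$ has three bridges and a perfect matching. The proof then partitions $E(S_{10})$ into the six parallel edges $A_1$ and the nine bridges $A_2$ and exploits three facts: $f^{-1}(A_1)$ induces disjoint \emph{even} circuits that must stay away from the edges near the bridges; $\chi'(S_{10}[A_2])=3$ while $\chi'(P-v)=4$, so the copy of $P-v$ inside $G^*$ must send at least one edge into $A_1$ and hence contains a $6$- or $8$-circuit mapped alternately onto a single doubled pair; and this forces every vertex of $P-v$ onto one arm $\{x_1,y_1,z_1\}$ of $S_{10}$, after which the triangle cannot be coloured consistently with the bridge conditions. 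The essential ingredient is a class-2 core ($P$ minus a vertex) sitting \emph{far from} the bridges and having only $6$- and $8$-circuits as even circuits; a plain ``propagate across one bridge between two balloons'' computation cannot find this and will keep returning colourable graphs.
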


\begin{con}[$S_{12}$-Conjecture, Mkrtchyan and Hakobyan \cite{Hakobyan2019S12AP}, 2019]
	\label{con:S_12_con}
If $G$ is a cubic graph with a perfect matching, then $S_{12} \prec G$.	
\end{con}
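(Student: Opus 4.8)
The statement is a \emph{conjecture} which, as the title and abstract announce, turns out to be false; so the plan is not to prove it but to refute it, and — since the abstract promises \emph{one} counterexample to both Conjectures~\ref{con:S_10_con} and~\ref{con:S_12_con} — to do so with a single small cubic graph $G$ that has a perfect matching and admits no $S_{12}$-coloring.

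The first step is a reduction that lets us work with $S_{10}$ in place of $S_{12}$. Since $S_{12}$ is obtained from $S_{10}$ by blowing up the central vertex $v_0$ into a triangle — the three edges at $v_0$ becoming three edges attached one per triangle vertex — there is a natural ``contract the triangle'' map $\phi\colon E(S_{12})\to E(S_{10})$: it is the identity on the edges inherited from $S_{10}$, sends each of the three triangle-incident external edges to the corresponding edge of $\partial_{S_{10}}(v_0)$, and sends the three triangle edges to the three edges of $\partial_{S_{10}}(v_0)$ in the (unique) way that is consistent around the triangle. One checks that $\phi$ carries every star of $S_{12}$ onto a star of $S_{10}$ (the three ``triangle stars'' all onto $\partial_{S_{10}}(v_0)$) and that $\phi$ is injective on every star of $S_{12}$, so that $\phi\circ g$ is again a proper edge-coloring whenever $g$ is. Hence $S_{12}\prec G$ implies $S_{10}\prec G$ for every $G$, and contrapositively any graph with no $S_{10}$-coloring has no $S_{12}$-coloring either. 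Because a bridgeless cubic graph automatically has a perfect matching (Petersen's theorem), it therefore suffices to exhibit a single small bridgeless cubic graph — or a small cubic graph carrying a bridge but still a perfect matching, whichever proves feasible — that is not $S_{10}$-colorable; such a $G$ refutes both conjectures at once.

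To rule out an $S_{10}$-coloring I would first distil the local constraints. If $f$ is an $S_{10}$-coloring of a cubic graph, then at every vertex the three edges receive three distinct colours forming the star of some degree-$3$ vertex of $S_{10}$; hence only those vertices of $S_{10}$ are ever used, each colour class $f^{-1}(e)$ is a matching of $G$, and the two colours on any two edges of $G$ meeting at a vertex must lie in a common star of $S_{10}$. I would tabulate this compatibility relation on $E(S_{10})$ together with the list of admissible stars, then build $G$ from a few interlocking copies of a rigid gadget (a short even cycle, or a vertex whose closed neighbourhood forces mutually incompatible local patterns), arranged so that assuming $S_{10}\prec G$ forces the admissible patterns to propagate from gadget to gadget until they collide. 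One must also ensure that $G$ is \emph{not} $3$-edge-colorable, since every class-$1$ cubic graph is trivially $S_{10}$- and $S_{12}$-colorable.

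The real obstacle is that $H$-colorings are globally, not locally, constrained: a choice made far away can be the one that ultimately fails, so the art is in engineering $G$ so that the case analysis stays essentially local, closes after a couple of rounds of propagation, and remains checkable by hand rather than dissolving into a brute-force search. The accompanying bookkeeping — verifying that the candidate $G$ really has a perfect matching (so that it is a legitimate instance of the $S_{12}$-Conjecture, not merely of the $S_{10}$-Conjecture), that it is class $2$, and that the propagation argument has considered every admissible star at every gadget vertex — is routine but is where an error would most easily hide. I would expect the final counterexample to consist of two or three copies of whatever $S_{10}$-incompatible gadget does the work, glued into a graph on only a dozen-or-so vertices.
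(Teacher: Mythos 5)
Your reduction from $S_{12}$ to $S_{10}$ is correct and is in substance the same as the paper's: the ``contract the triangle'' map you describe is precisely an $S_{10}$-coloring of $S_{12}$, so $S_{10}\prec S_{12}$, and the paper invokes exactly this via transitivity of $\prec$ to transfer the counterexample from Conjecture~\ref{con:S_10_con} to Conjecture~\ref{con:S_12_con}. That part stands.

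The gap is everything after that: you have a research plan, not a refutation. No concrete graph is exhibited, and the proposed mechanism (``interlocking rigid gadgets whose admissible local patterns propagate and collide'') never identifies the structural lever that actually makes the argument close. The paper's counterexample $G^*$ is built around \emph{bridges}: one vertex of the Petersen graph is replaced by a subdivided/triangle configuration with three pendant copies of $S_4$ attached by cut edges. The engine of the proof is Observation~\ref{obs:coloring_basics_new}(iii) (bridges of $G$ must map to bridges of $S_{10}$) combined with the fact that the preimage of the three $2$-circuits of $S_{10}$ induces even circuits disjoint from all bridges; this forces the Petersen-minus-a-vertex part $H$ to be colored entirely inside one ``arm'' $\{x_1,y_1,z_1\}$ of $S_{10}$, which then collides with where the triangle and the bridges must go. Your sketch does not use bridges at all, and your fallback of seeking a \emph{bridgeless} counterexample is the wrong direction: the conjectures are aimed precisely at cubic graphs with bridges, the bridgeless case is tied to the Petersen Coloring Conjecture (verified for small orders), and a bridgeless example would be far harder to produce, if one exists at all. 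Without an explicit $G^*$ and the completed case analysis, the statement is not refuted.
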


The graphs $S_4$, $S_{10}$ and $S_{12}$ are depicted in Figure~\ref{fig:PS_10S_12} and are the only graphs that may fulfill the statements of the conjectures above in the following sense. If $H$ is a connected graph that colors every bridgeless cubic graph, then either $H$ is isomorphic to $P$, or $H$ contains $S_4$ as an induced subgraph  \cite{MTZ_r_graphs}. If $H$ is a connected graph that colors every cubic graph, then $H$ is isomorphic to $S_{10}$; if $H$ is a connected graph that colors every cubic graph with a perfect matching, then $H$ is isomorphic to either $S_{10}$ or $S_{12}$ \cite{MTZ_r_graphs}. The $S_4$-Conjecture, now a theorem, was verified by Kardo{\v{s}}, M{\'a}{\v{c}}ajov{\'a} and Zerafa \cite{KARDOS20231}.
In this short note we give an answer to the $S_{10}$- and the $S_{12}$-Conjecture by constructing a cubic graph with a perfect matching that can not be colored by $S_{10}$. Note that the relation $\prec$ is transitive, which imply that the presented graph can also not be colored by $S_{12}$. Hence, both the $S_{10}$- and the $S_{12}$-Conjecture are false.

\begin{figure}[htbp]
		\centering
		\subfigure[$P$]{
			\begin{minipage}[t]{0.28\textwidth}
				\centering
				\includegraphics[height=4.5cm]{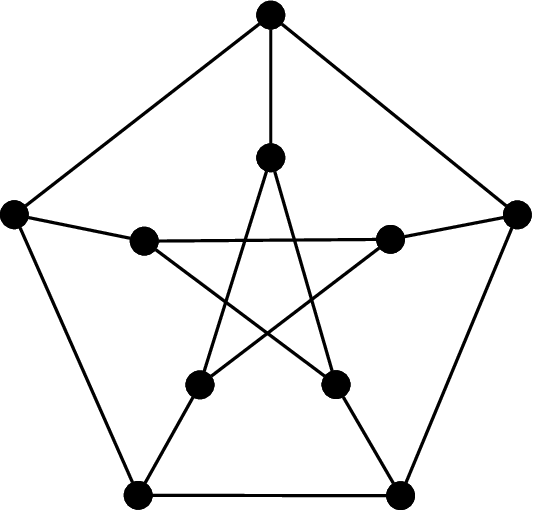}
			\end{minipage}
		}
		\subfigure[$S_{4}$]{
			\begin{minipage}[t]{0.15\textwidth}
				\centering
				\includegraphics[height=2cm]{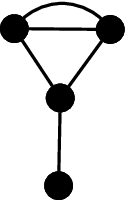}
			\end{minipage}
		}
		\subfigure[$S_{10}$]{
			\begin{minipage}[t]{0.22\textwidth}
				\centering
				\includegraphics[height=3cm]{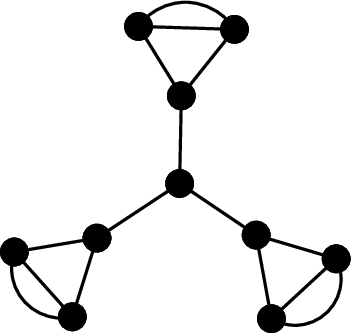}
			\end{minipage}
		}
		\subfigure[$S_{12}$]{
			\begin{minipage}[t]{0.22\textwidth}
				\centering
				\includegraphics[height=3cm]{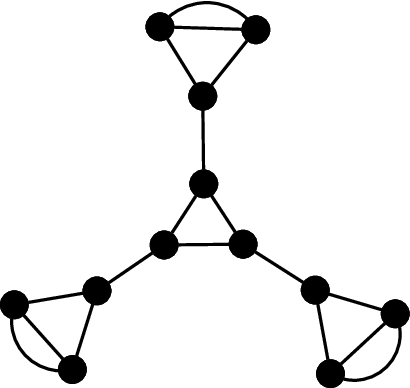}
			\end{minipage}
		}
		\caption{The Petersen graph and the graphs $S_4$, $S_{10}$ and $S_{12}$.}
		\label{fig:PS_10S_12}
	\end{figure}

\section{Definitions and preliminary results}
A \emph{circuit} is a connected 2-regular graph; a circuit is \emph{even} if it is of even order; a \emph{$k$-circuit} is a circuit of order $k$.

Let $G$ be a graph. A \emph{matching} is a set $M\subseteq E(G)$ such that no two edges of $M$ are adjacent. Moreover, $M$ is \emph{perfect} if every vertex of $G$ is incident with an edge of $M$. The \emph{chromatic index}, denoted $\chi'(G)$, is the smallest integer $k$ for which $E(G)$ can be partitioned into $k$ matchings.
For a set $X \subseteq V(G)$, the set of edges with exactly one end in $X$ is denoted by $\partial_G(X)$.
Let $E' \subseteq E(G)$. We say that $E'$ \emph{induces} a subgraph $G'$ of $G$ if $E(G') = E'$ and $V(G')$ contains all vertices of $G$ that are incident with an edge of $E'$. Such a subgraph $G'$ is denoted by $G[E']$. 

We will use the following basic observation concerning $H$-colorings.

\begin{obs}
	\label{obs:coloring_basics_new}
	Let $H$ and $G$ be graphs, let $f \colon E(G) \to E(H)$ be an $H$-coloring of $G$ and let $H'$ be a subgraph of $H$.
	\begin{itemize}
		\item[(i)] If $H'$ is $k$-regular, then $f^{-1}(E(H'))$ induces a $k$-regular subgraph in $G$.
		\item[(ii)] $\chi'(G[f^{-1}(E(H'))]) \leq \chi'(H')$.
		\item[(iii)] If $e$ is a bridge of $G$, then $f(e)$ is a bridge of $H$.
	\end{itemize}
\end{obs}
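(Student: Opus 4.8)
All three parts unwind the definition of an $H$-coloring via one local fact that I will invoke throughout: for every $v\in V(G)$ the restriction $f|_{\partial_G(v)}$ is a \emph{bijection} onto $\partial_H(u_v)$ for some $u_v\in V(H)$ --- it is injective because $f$ is a proper edge-coloring, and its image is exactly $\partial_H(u_v)$ by the second defining property. I also record the elementary fact that for edges of a subgraph $H'\subseteq H$, being adjacent in $H$ is the same as being adjacent in $H'$ (an edge of $H'$ keeps its endpoints), which silently upgrades matching/regularity statements about $H'$ to the corresponding statements inside $H$.

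For (i), write $F:=f^{-1}(E(H'))$ and take any vertex $v$ incident with an edge of $F$. With $u_v$ as above, $f|_{\partial_G(v)}$ restricts to a bijection between $\{g\in\partial_G(v): f(g)\in E(H')\}=\partial_G(v)\cap F$ and $\partial_H(u_v)\cap E(H')$. The left-hand set is nonempty by the choice of $v$, so $u_v$ is incident with an edge of $H'$, hence $u_v\in V(H')$, and then $k$-regularity of $H'$ gives $|\partial_H(u_v)\cap E(H')|=k$. Thus $\deg_{G[F]}(v)=k$ for every such $v$, i.e.\ $G[F]$ is $k$-regular (the case $F=\emptyset$ being vacuous).

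For (ii), fix a decomposition $E(H')=M_1\cup\dots\cup M_k$ into $k:=\chi'(H')$ matchings of $H'$; by the remark above each $M_i$ is also a matching of $H$. The preimages $f^{-1}(M_1),\dots,f^{-1}(M_k)$ partition $F=f^{-1}(E(H'))=E(G[F])$, and each is a matching of $G$: two distinct adjacent edges $g_1,g_2\in f^{-1}(M_i)$ meeting at a vertex $v$ would satisfy $f(g_1)\ne f(g_2)$ (properness) with both $f(g_1),f(g_2)\in\partial_H(u_v)\cap M_i$, contradicting that $M_i$ is a matching. Hence $E(G[F])$ decomposes into $k$ matchings, so $\chi'(G[F])\le k$.

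For (iii), suppose $c:=f(e)$ is not a bridge of $H$. Since $H$ has no loops the ends of $c$ are distinct, so adding $c$ to a path between them in $H-c$ yields a circuit $C$ of $H$ containing $c$. As $C$ is $2$-regular, part (i) shows $G[f^{-1}(E(C))]$ is $2$-regular, hence a vertex-disjoint union of circuits; since $e\in f^{-1}(E(C))$ this exhibits a circuit of $G$ through $e$, contradicting that $e$ is a bridge. I do not expect a genuine obstacle in any part; the only points needing slight care are the subgraph-adjacency remark used in (i) and (ii), the fact that a finite loopless $2$-regular graph is a vertex-disjoint union of circuits, and the degenerate empty cases.
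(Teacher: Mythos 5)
Your proof is correct in all three parts, and for part (ii) it is essentially the paper's argument: partition $E(H')$ into $\chi'(H')$ matchings and check that the preimages are matchings of $G$ (the paper gets the latter by viewing each matching as a $1$-regular subgraph and invoking (i); you verify it directly via properness at a common endpoint --- same substance). The real divergence is in parts (i) and (iii). The paper dismisses (i) as ``a direct consequence of the definition'' and outsources (iii) entirely to the Hakobyan--Mkrtchyan reference, whereas you supply self-contained arguments: for (i) the local bijection $f|_{\partial_G(v)}\colon\partial_G(v)\to\partial_H(u_v)$ restricted to $f^{-1}(E(H'))$, and for (iii) the contrapositive --- a non-bridge $f(e)$ lies on a circuit $C$ of $H$, so by (i) the set $f^{-1}(E(C))$ induces a $2$-regular subgraph of $G$ containing $e$, exhibiting a circuit of $G$ through $e$. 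This derivation of (iii) from (i) is clean, uses only facts already established, and makes the observation independent of the cited paper; the loopless hypothesis is correctly used to guarantee that the two ends of $f(e)$ are distinct so that edge plus path really is a circuit. The only points worth keeping explicit, which you do flag, are that a matching or $k$-regular subgraph of $H'$ remains one in $H$, and that a finite loopless $2$-regular graph is a disjoint union of circuits.
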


\begin{proof}
Statement $(i)$ is a direct consequence of the definition of $H$-colorings; statement $(iii)$ has been proven in \cite{Hakobyan2019S12AP}. 
By definition of the chromatic index, $E(H')$ can be partitioned into $\chi'(H')$ matchings. By $(i)$, the pre-image of them are $\chi'(H')$ pairwise disjoint matchings in $G$. Hence, $f^{-1}(E(H'))$ can be partitioned into $\chi'(H')$ matchings, which is equivalent to statement $(iii)$.
\end{proof}

\section{A cubic graph with a prefect matching that cannot be colored by $S_{10}$}
In this section we construct a cubic graph $G^*$ that has a perfect matching but cannot be colored by $S_{10}$.

Take a copy of $P$. For one vertex, subdivide the edges incident to it and expand one of the new vertices to a triangle. Attach a copy of $S_4$ to each of the three vertices of degree 2. We obtain a cubic graph $G^*$ that has a perfect matching (see Figure~\ref{fig:G}).

\begin{figure}[H]
\centering
\includegraphics[height=7cm]{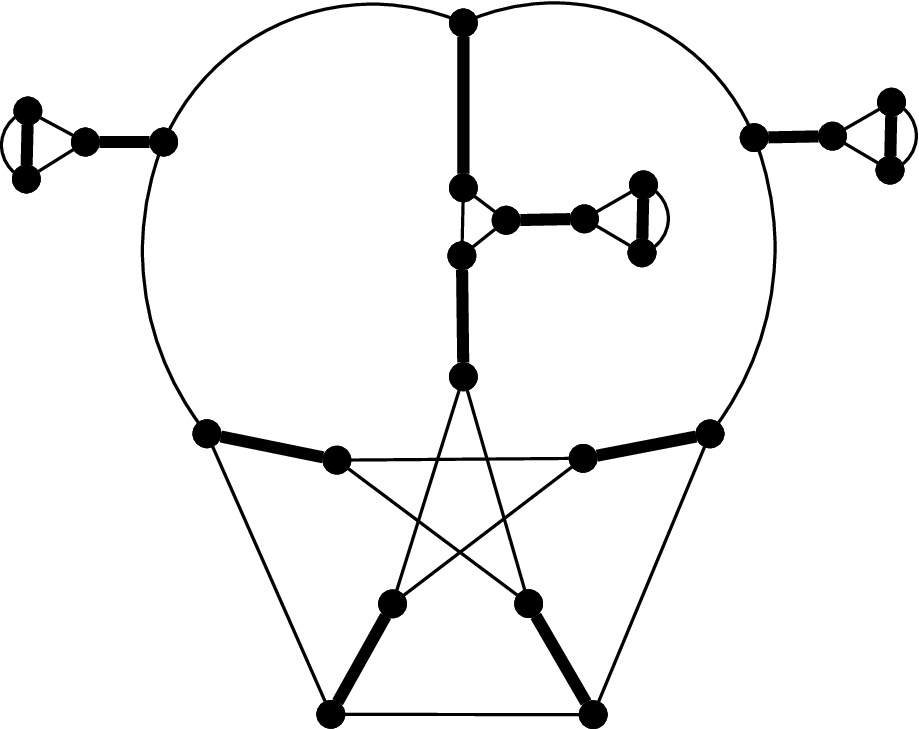}
\caption{The graph $G^*$ with a perfect matching (bold edges).}
\label{fig:G}
\end{figure}

We now prove that it does not admit an $S_{10}$-coloring.

\begin{theo}
	\label{thm:not_colorable_with_S_10}
	$G^*$ can not be colored by $S_{10}$.
\end{theo}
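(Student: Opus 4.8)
The plan is to describe the structure forced by an $S_{10}$-coloring, to show that the three attached copies of $S_4$ are pinned onto the three petals of $S_{10}$, and finally to derive a contradiction from the Petersen part of $G^*$.

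\textbf{Step 1: local structure of $S_{10}$-colorings.} First I would record the vertex-stars of $S_{10}$. Write $v_0$ for the central vertex and, for $i\in\{1,2,3\}$, let the $i$-th petal consist of $a_i,b_i,c_i$, where $a_i$ is joined to $v_0$ by the bridge $v_0a_i$ and $b_i,c_i$ are joined by a double edge. Then $\partial_{S_{10}}(v_0)=\{v_0a_1,v_0a_2,v_0a_3\}$, $\partial_{S_{10}}(a_i)=\{v_0a_i,a_ib_i,a_ic_i\}$, and $\partial_{S_{10}}(b_i)$, $\partial_{S_{10}}(c_i)$ each contain the two parallel $b_ic_i$-edges. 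Hence, for an $S_{10}$-coloring $f$ of a cubic graph: a vertex incident with two parallel edges maps to some $b_i$ or $c_i$ (with the parallel pair mapped onto the parallel $b_ic_i$-pair), and a vertex incident with a bridge maps into $\{v_0,a_1,a_2,a_3\}$. By Observation~\ref{obs:coloring_basics_new}(iii), the three bridges of $G^*$ (the edges joining the three copies of $S_4$ to the rest) are mapped onto bridges of $S_{10}$, i.e.\ onto edges $v_0a_i$.

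\textbf{Step 2: the copies of $S_4$ are pinned to petals.} In an attached copy of $S_4$, the two parallel edges force (by Step~1) the two endpoints of the double edge onto some $\{b_i,c_i\}$ and the double edge onto the parallel $b_ic_i$-pair; then the star of the third vertex of the copy forces its two simple edges onto $a_ib_i$ and $a_ic_i$ and its attaching bridge onto $v_0a_i$. So $f$ maps each copy of $S_4$ isomorphically onto one petal-with-bridge of $S_{10}$; this yields a map $\tau$ from the three copies of $S_4$ to the three petals.

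\textbf{Step 3: the Petersen core.} Let $C$ be the ``core'' of $G^*$, i.e.\ the subdivided, triangle-expanded copy of $P$ with the three copies of $S_4$ removed, so that the three former attachment points $\delta_1,\delta_2,\delta_3$ now have degree $2$ in $C$; note that $C$ is bridgeless. By Step~2 the stripped-off bridge at $\delta_j$ carries the color $v_0a_{\tau(j)}$, so the $G^*$-star of $\delta_j$ is mapped to $\partial_{S_{10}}(v_0)$ or to $\partial_{S_{10}}(a_{\tau(j)})$; accordingly, the two core edges at $\delta_j$ receive the two other bridge colors $v_0a_{j'},v_0a_{j''}$ or the two petal colors $a_{\tau(j)}b_{\tau(j)},a_{\tau(j)}c_{\tau(j)}$. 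I would then go through the (few) possibilities for $\tau$ and for these local choices, using in particular the blown-up triangle $pqr$ --- which is exactly the analogue of the move producing $S_{12}$ from $S_{10}$, and which also supplies $G^*$ with its perfect matching --- to propagate the coloring through $C$ and, in every case, to reach a contradiction: either a core vertex whose forced star is not a star of $S_{10}$, or a proper $3$-edge-coloring of $P$ itself, which is impossible because $\chi'(P)=4$.

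\textbf{Expected obstacle.} The only delicate point is Step~3. Since the coloring condition is purely local, the petal colors may legitimately reappear on core edges, so this is not a matter of ``no colors remaining'' but a genuine case analysis of how the bridge colors and petal colors can be distributed over the bridgeless, Petersen-derived graph $C$ --- and it is there that the precise shape of $G^*$ (one vertex of $P$ subdivided and expanded to a triangle, with the three $S_4$-gadgets placed as in Figure~\ref{fig:G}) must be used in full.
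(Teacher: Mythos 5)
Your Steps 1 and 2 are correct and consistent with what the paper extracts from Observation~\ref{obs:coloring_basics_new}: the bridges of $G^*$ must land on the bridges $v_0a_i$ of $S_{10}$, each attached copy of $S_4$ is mapped isomorphically onto a petal, and consequently no edge outside the $S_4$-copies and the copy of $P-v$ can be mapped to a parallel edge. (One small point you skip in Step 2: you must also rule out that both endpoints of a double edge of $G^*$ map to the \emph{same} vertex of $S_{10}$; this follows because the star condition is a bijection on the three edges at each vertex, so the two simple edges of that copy would otherwise be forced to receive the same color.)

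The genuine gap is Step 3, which is where the entire content of the theorem lies: you announce a case analysis over $\tau$ and over the local choices at $\delta_1,\delta_2,\delta_3$, and assert that ``in every case'' a contradiction appears, but you neither carry it out nor identify the structural reason why it must succeed. As you yourself note, the petal colors (including the parallel ones) may legitimately reappear anywhere in the core, so a purely local propagation from the three attachment vertices does not close up after a few cases --- inside the copy of $P-v$ the coloring can branch extensively. The lever the paper uses, and which is absent from your plan, is a chromatic-index comparison: $\chi'(P-v)=4$ while the simple part $A_2=E(S_{10})\setminus A_1$ satisfies $\chi'(S_{10}[A_2])=3$, so by Observation~\ref{obs:coloring_basics_new}(ii) at least one edge of the copy of $P-v$ must be mapped to a parallel edge. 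Combined with parts (i)--(iii), this forces a $6$- or $8$-circuit of $P-v$ whose edges alternate on a single parallel pair $x_1y_1$, and a short neighborhood argument then confines the images of \emph{all nine} vertices of $P-v$ to the single petal $\{x_1,y_1,z_1\}$. Only after this global confinement does the analysis around the subdivided vertex $u$ and the expanded triangle reduce to a genuinely small case check. Without such an argument (or a completed exhaustive verification), your Step 3 is a plan, not a proof.
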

	
\begin{proof}
Suppose $f:E(G^*) \to E(S_{10})$ is an $S_{10}$-coloring of $G^*$. 
Let $H$ be the subgraph of $G^*$ isomorphic to the graph obtained from $P$ by deleting one vertex. Let $E_1 = E(H)$ and let $E_2$  be the set of remaining edges of $G^*$ that do not belong to a copy of $S_4$.
Moreover, let $A_1 \subset E(S_{10})$ be the set of edges of $S_{10}$ belonging to a 2-circuit and let $A_2 = E(S_{10}) \setminus A_1$. In the following we will refer to the vertices of $G^*$ and $S_{10}$ by the labels introduced in Figure~\ref{fig:edge-set_partitions}; in this figure also the edge sets $E_1, E_2, A_1$ and $A_2$ are depicted.

\begin{figure}[htbp]
		\centering
		\subfigure[$G^*$ and edge sets $E_1$ (red), $E_2$ (green).]{
			\begin{minipage}[t]{0.55\textwidth}
				\centering
				\includegraphics[height=7cm]{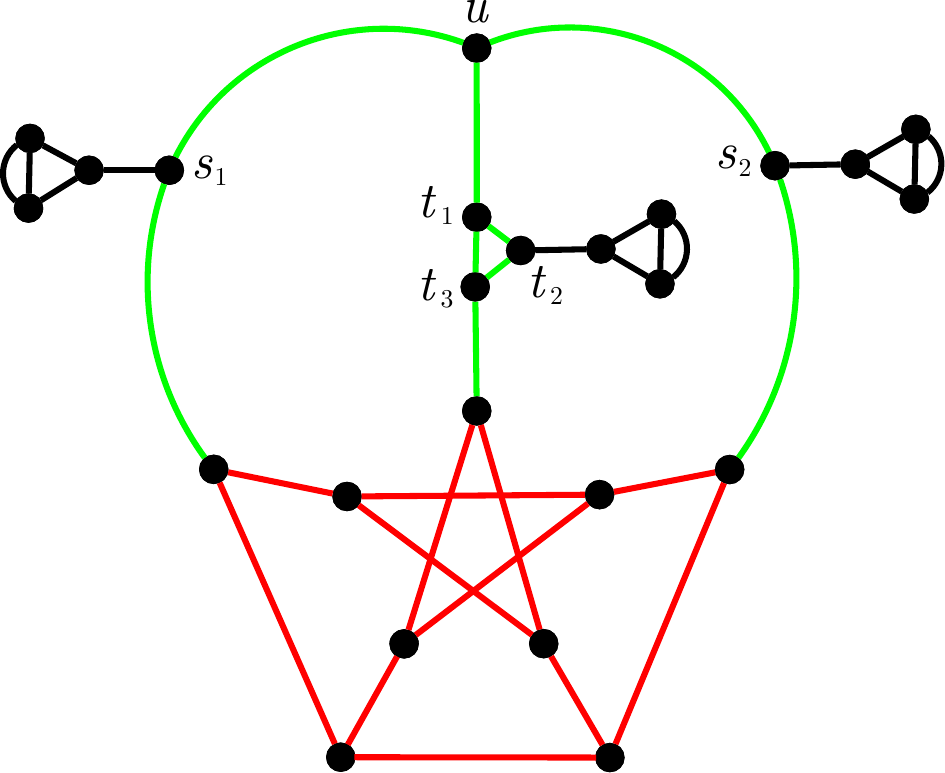}
			\end{minipage}
		}
		\subfigure[$S_{10}$ and edge sets $A_1$ (red), $A_2$ (green)]{
			\begin{minipage}[t]{0.35\textwidth}
				\centering
				\includegraphics[height=4cm]{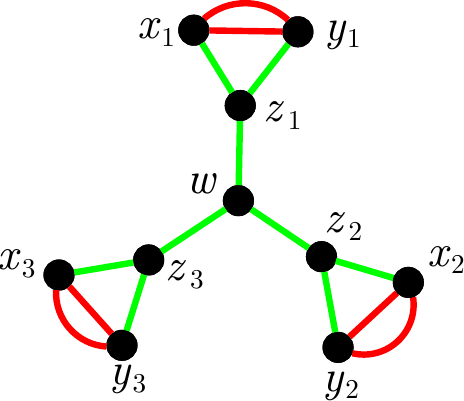}
			\end{minipage}
		}
		\caption{The labels and edge sets of $G^*$ and $S_{10}$ used in the proof of Theorem~\ref{thm:not_colorable_with_S_10}.}
		\label{fig:edge-set_partitions}
	\end{figure}


\begin{claim}
$f^{-1}(A_1) \cap E_2 = \emptyset$.
\label{clm:A_1_E_2}
\end{claim}\\
\emph{Proof of Claim \ref{clm:A_1_E_2}.}
By statement $(i)$ of Observation~\ref{obs:coloring_basics_new}, the subgraph of $G^*$ induced by $f^{-1}(A_1)$ consists of pairwise disjoint circuits, which are even circuits by statement $(ii)$. Moreover, by statement $(iii)$ of Observation~\ref{obs:coloring_basics_new}, no edge of these even circuits is adjacent with a bridge of $G^*$. Hence, by the structure of $G^*$ no edge of $E_2$ is mapped to an edge of $A_1$.
\ENDproof

Let $H'$ be the subgraph of $H$ induced by $f^{-1}(A_1) \cap E_1$

\begin{claim}
$H'$ is either a $6$-circuit or an $8$-circuit.
\label{clm:A_1_E_1}
\end{claim}\\
\emph{Proof of Claim \ref{clm:A_1_E_1}.}
By Claim~\ref{clm:A_1_E_2}, $H'$ consists of pairwise disjoint even circuits. Furthermore, observe that $\chi'(H)=4$ and $\chi'(S_{10}[A_2])=3$. Hence, by (ii) of Observattion~\ref{obs:coloring_basics_new}, atleast one edge of $E_1$ is mapped to an edge in $A_1$, i.e. $E(H_1) \neq \emptyset$. Since $H$ is a subgraph of $P$, the only even circuits it contains are 6- and 8-circuits, which implies that $H'$ is either a $6$- or an $8$-circuit.
\ENDproof

Without loss of generality we assume that the edges of $H'$ are alternately mapped to the parallel edges connecting $x_1$ and $y_1$.
Let $f_V\colon V(G^*) \to V(S_{10})$ be the mapping induces by $f$, i.e. for every $v \in V(G^*)$, the vertex $f_V(v)$ is the unique vertex $v' \in V(S_{10})$ with $f(\partial_{G^*}(v))=\partial_{S_{10}}(v')$.
\begin{claim}
$f_V(V(H)) \subseteq \{x_1,y_1,z_1\}$.
\label{clm:f_V(H)}
\end{claim}\\
\emph{Proof of Claim \ref{clm:f_V(H)}.}
Suppose there is a vertex $v \in V(H)$ that is not mapped to $x_1$, $y_1$ or $z_1$.
Each vertex of $H'$ is mapped to either $x_1$ or $y_1$, and hence every vertex adjacent to a vertex of $H'$ is mapped to a vertex in $\{x_1,y_1,z_1\}$. By the structure of $H$, we deduce that $H'$ is a 6-circuit, $v$ is of degree 2 in $H$ and both its neigbhours are adjacent to two vertices in $H'$. Hence, $f_V(v)$ is incident with two edges that are also incident with $x_1,y_1$ or $z_1$, a contradiction.
\ENDproof

\begin{claim}
$f_V(v_1) \in \{z_1,w\}$.
\label{clm:f_V(u)}
\end{claim}\\
\emph{Proof of Claim \ref{clm:f_V(u)}.}
By Claim~\ref{clm:A_1_E_2}, $f_V(u) \in \{w, z_1, z_2, z_3\}$. Hence, by symmetry suppose that $f_V(u)=z_2$. As a consequence, $f(us_1) \in \{z_2x_2, z_2y_2\}$ or $f(us_2) \in \{z_2x_2, z_2y_2\}$; without loss of generality we assume $f(us_1) = z_2x_2$.
Hence, $f_V(s_1) \in \{x_2,z_2\}$, which contradicts Claim~\ref{clm:f_V(H)}.
 \ENDproof

By Claim~\ref{clm:f_V(u)}, one edge incident with $u$ is mapped to $z_1w$.
If $f(us_1)=z_1w$, then either $f_V(s_1)=z_1$, in contradiction to (iii) of Observation~\ref{obs:coloring_basics_new}, or $f_V(s_1) = w$, in contradiction to Claim~\ref{clm:f_V(H)}.
Thus, by symmetry we may assume $f(ut_1)= z_1w$. Observe that $t_1t_2, t_2t_3, t_1t_3$ are mapped to three mutually adjacent edges. Thus, Claim~\ref{clm:A_1_E_2} implies $f(\{t_1t_2, t_2t_3, t_1t_3\})=\partial_{S_{10}}(z_1)$ or $f(\{t_1t_2, t_2t_3, t_1t_3\})=\partial_{S_{10}}(w)$. In the first case we deduce $f(\partial_{G^*}(\{t_1,t_2,t_3\}))=\partial_{S_{10}}(z_1)$ in contradiction to (iii) of Observation~\ref{obs:coloring_basics_new}; in the second case we deduce $f(\partial_{G^*}(\{t_1,t_2,t_3\}))=\partial_{S_{10}}(w)$ in contradiction to Claim~\ref{clm:f_V(H)}.


\end{proof}

\bibliography{Lit_reg_graphs}{}

\begin{thebibliography}{10}

\bibitem{brinkmann2013generation}
G.~Brinkmann, J.~Goedgebeur, J.~H{\"a}gglund, and K.~Markstr{\"o}m.
\newblock Generation and properties of snarks.
\newblock {\em J. Comb. Theory, Ser. B}, 103(4):468--488, 2013.

\bibitem{fulkerson1971blocking}
D.~R. Fulkerson.
\newblock Blocking and anti-blocking pairs of polyhedra.
\newblock {\em Math. Programming}, 1(1):168--194, 1971.

\bibitem{Hakobyan2019S12AP}
A.~Hakobyan and V.~V. Mkrtchyan.
\newblock {$S_{12}$} and {$P_{12}$}-colorings of cubic graphs.
\newblock {\em Ars Math. Contemp.}, 17:431--445, 2019.

\bibitem{jaeger1988nowhere}
F.~Jaeger.
\newblock Nowhere-zero flow problems.
\newblock In {\em Selected topics in graph theory, 3}, pages 71--95. Academic
  Press, San Diego, CA, 1988.

\bibitem{KARDOS20231}
F.~Kardo{\v{s}}, E.~M{\'a}{\v{c}}ajov{\'a}, and J.~P. Zerafa.
\newblock Disjoint odd circuits in a bridgeless cubic graph can be quelled by a
  single perfect matching.
\newblock {\em J. Comb. Theory, Ser. B}, 160:1--14, 2023.

\bibitem{ma2025sets}
Y.~Ma, D.~Mattiolo, E.~Steffen, and I.~H. Wolf.
\newblock Sets of $r$-graphs that color all $r$-graphs.
\newblock {\em Combinatorica}, 45(2):16, 2025.

\bibitem{MAZZUOCCOLO2013235}
G.~Mazzuoccolo.
\newblock New conjectures on perfect matchings in cubic graphs.
\newblock {\em Electron. Notes in Discrete Math.}, 40:235--238, 2013.

\bibitem{MTZ_r_graphs}
G.~Mazzuoccolo, G.~Tabarelli, and J.~P. Zerafa.
\newblock On the existence of graphs which can colour every regular graph.
\newblock {\em Discrete Applied Mathematics}, 337:246--256, 2023.

\bibitem{MZ_S4equi}
G.~Mazzuoccolo and J.~P. Zerafa.
\newblock An equivalent formulation of the {F}an-{R}aspaud {C}onjecture and
  related problems.
\newblock {\em Ars Math. Contemp.}, 18:87--103, 2020.

\bibitem{Mkrtchyan_Pet_col}
V.~V. Mkrtchyan.
\newblock A remark on the {P}etersen coloring conjecture of {J}aeger.
\newblock {\em Australas. J. Combin.}, 56:145--151, 2013.

\bibitem{C.-Q._Zhang_book}
C.~Q. Zhang.
\newblock {\em Integer flows and cycle covers of graphs}, volume 205 of {\em
  Monographs and Textbooks in Pure and Applied Mathematics}.
\newblock Marcel Dekker, Inc., New York, 1997.

\end{thebibliography}
\addcontentsline{toc}{section}{References}
\bibliographystyle{abbrv}

\end{document}